\documentclass[12pt]{amsart}

\title[Sierpi\'nski dynamical systems]{On locally distinguishing Sierpi\'nski dynamical systems}

\author[S. Merenkov]{Sergei Merenkov}
\address{Department of Mathematics, The City College of New York and CUNY Graduate Center, New York, NY 10031, USA}
\email{smerenkov@ccny.cuny.edu}
\thanks{Supported by NSF grant DMS-2247364.}
\subjclass[2020]{}

\usepackage{amsmath, amssymb, amsthm,latexsym}
\usepackage{graphicx}
\usepackage{amscd}
\newcommand\C{{\mathbb C}}

\newcommand\N{{\mathbb N}}

\newcommand\Z{{\mathbb Z}}

\newcommand\co{\colon}
\renewcommand\:{\colon}
\newcommand\sub {\subseteq}

\newcommand\wh{\widehat}
\newcommand\wt{\widetilde}

\newtheorem{theorem}{Theorem}[section]
\newtheorem{definition}[theorem]{Definition}

\newtheorem{corollary}[theorem]{Corollary}

\theoremstyle{definition}

\usepackage{hyperref}
\hypersetup{
    colorlinks=true,
    linkcolor=blue,
    filecolor=magenta,      
    urlcolor=cyan,
}

\begin{document}


\abstract{We prove that there is no local quasiconformal map between the limit set of a convex-cocompact Kleinian group and the Julia set of a postcritically finite rational map, provided that both are Sierpi\'nski carpets. This contrasts with the recent results by Y.~Luo, D.~Ntalampekos and Y.~Luo, M.~Mj, S.~Mukherjee for gasket and tree-like spaces, respectively.  
}
\endabstract

\maketitle


\section{Introduction}\label{s:Intro}

\noindent
Recently, Y.~Luo and D.~Ntalampekos proved the following result.
\begin{theorem}
	\cite[Theorem~1.5]{LN24}
	There exist a
	gasket limit set $\Lambda(G)$ of a Kleinian group $G$ and a gasket Julia set $\mathcal J(f)$ of a rational map $f$ such that for any point $z \in \Lambda(G)$, there exist a neighborhood $U$ of $z$
	and a quasiconformal map $\phi\: \wh\C \to\wh\C$ with $\phi(U\cap \Lambda(G)) = \phi(U)\cap \mathcal J(f)$, where $\wh\C$ denotes the Riemann sphere.
\end{theorem}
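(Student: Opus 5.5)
This is a cited result (Luo--Ntalampekos, Theorem~1.5 of \cite{LN24}), so the plan below reconstructs how one would prove it rather than anything established in this paper. The approach is to take one explicit pair. For the Kleinian side, take the Apollonian gasket group $G$, or a geometrically finite group with gasket limit set, whose parabolic fixed points are exactly the tangency points of the complementary disks. For the rational side, take a geometrically finite map $f$ with parabolic points, for instance a Blaschke-product-type mating or a specific map like $z\mapsto z^2+1/(4z)$-type examples with a parabolic fixed point. Its Julia set should be a gasket whose tangency points are iterated preimages of parabolic points. Both sets are then \emph{geometrically finite gaskets}. The idea is that local quasiconformal equivalence is governed only by two kinds of points, generic points and cusp (tangency) points, and we match each kind separately.

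The key steps, in order, are the following.
\begin{enumerate}
\item Use dynamics to establish uniform local geometry. Away from the cusps, both gaskets are quasisymmetrically uniform: complementary Jordan domains are uniform quasidisks, and small pieces can be blown up to bounded size by group elements or by iterates of $f$ with bounded distortion. This follows from expansion away from parabolic points for $f$, and from convex-cocompactness relative to cusps for $G$.
\item Near each cusp, compare the local picture to a model. For $G$, a rank-one parabolic subgroup normalizes the picture to two tangent disks, namely the complements of two parallel strips, with translation acting on it. For $f$, the Leau--Fatou flower coordinates conjugate $f$ near a parabolic point to translation. One must choose $f$ so that its parabolic points have exactly two petals, matching the two-disk tangency.
\item Construct local quasiconformal maps between the cusp models via Fatou coordinates. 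These maps are equivariant, intertwining translation with translation, and they descend to the gaskets by taking fundamental-domain pieces and extending using the group action and the dynamics.
\item For a general point, cover a neighborhood by pieces of bounded combinatorics. Each piece is mapped via a ``pulling back'' construction: pieces of $\Lambda(G)$ are images of finitely many model pieces under $G$, pieces of $\mathcal J(f)$ are preimages of finitely many model pieces under iterates of $f$, and these are matched combinatorially using the gasket's nerve, i.e.\ the tangency graph of the disks.
\end{enumerate}

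Gluing the piecewise maps requires compatibility along tangency points. This is arranged by fixing the cusp maps first and interpolating on the remaining compact part.

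The main obstacle is combinatorial compatibility. The tangency (circle-packing) structure near any point of $\mathcal J(f)$ must be locally identical to that of $\Lambda(G)$, while the global structure differs, since otherwise we would obtain a global conjugacy, which is impossible. I would try first to pick $f$ so that its gasket is a quasiconformal image of a \emph{subgasket} region of the Apollonian packing: one Fatou component is a fundamental piece and $f$ acts as a ``folding'' map. Local neighborhoods would then be identified with neighborhoods in $\Lambda(G)$ through finitely many branches of $f^{-n}$ composed with elements of $G$. The quasiconformal extension to $\wh\C$ comes from extending across each complementary disk using uniform quasidisk bounds, via the Beurling--Ahlfors or Douady--Earle extension, with constants that are uniform thanks to the bounded distortion in step (1).
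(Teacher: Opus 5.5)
The paper does not prove this statement. It is quoted from \cite{LN24} only as background for the carpet results, so there is no in-paper argument to compare yours with. Judged on its own, your proposal has a genuine gap, and it sits exactly where the content of the theorem lies.

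Suppose $\phi$ is a quasiconformal map of $\wh\C$ with $\phi(U\cap\Lambda(G))=\phi(U)\cap\mathcal J(f)$. Then $\phi$ sends the complementary components of $\Lambda(G)$ meeting $U$ to complementary components of $\mathcal J(f)$, and it preserves which of them touch. Hence the whole infinite tangency pattern of $\Lambda(G)$ near $z$, at every scale, must reappear somewhere in $\mathcal J(f)$. Your guiding principle, that local quasiconformal equivalence ``is governed only by two kinds of points, generic points and cusp points,'' is therefore not correct. Knowing that every point is either generic or a two-petal cusp does not determine this pattern, and producing a pair for which the patterns agree is precisely the existence assertion.

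You acknowledge this when you call combinatorial compatibility the main obstacle, but you never resolve it. No pair is actually fixed: you offer the Apollonian group ``or'' another geometrically finite group, and a mating ``or'' a ``$z^2+1/(4z)$-type'' map. You do not check that the candidate $f$ even has a gasket Julia set. The closing idea, making $\mathcal J(f)$ a quasiconformal image of a subgasket region of the Apollonian packing with $f$ acting as a ``folding'' map, is not a construction, because no rational map with this property is exhibited or shown to exist. Steps (3) and (4) presuppose the missing ingredient: matching ``finitely many model pieces'' on the two sides only makes sense once these pieces, together with everything inside them, are known to be combinatorially isomorphic.

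There is also an analytic gap even if the combinatorics were granted. A neighborhood $U$ contains pieces at infinitely many scales, accumulating at infinitely many cusps. Iterates of $f$ have unbounded distortion on pieces near parabolic points. So ``fixing the cusp maps first and interpolating on the remaining compact part'' does not give a single map of $\wh\C$ with uniformly bounded dilatation on all pieces at once. What is needed is a mechanism that produces the identification at all scales simultaneously. One example would be a topological conjugacy between $f$ on suitable pieces of $\mathcal J(f)$ and a piecewise M\"obius Markov model of $G$ on pieces of $\Lambda(G)$, upgraded to a quasiconformal map with uniform control in the cusps.

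Your correct observations would enter at that point. The contact points of $\mathcal J(f)$ must indeed be parabolic, with two petals lying in distinct Fatou components. At a repelling contact point, the complement of the two touching components contains, at every scale, disks of radius comparable to their distance from the point. That cannot be matched quasisymmetrically with the horns between two tangent disks. Fatou coordinates do identify the cusp models. As written, though, neither the combinatorial matching nor the uniform quasiconformality is established.
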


Here, a set $K\subset\wh\C$ is a \emph{gasket} if it is a locally connected continuum
with empty interior and that has the following properties: each complementary component of $K$ is a Jordan region, the boundaries of any two complementary components of $K$ share at most
one point, no point of $\wh\C$ belongs to the boundaries of three complementary components of $K$, the \emph{contact graph} corresponding to $K$, obtained by assigning a vertex to each complementary component and an edge if two components share a boundary point, is connected. For the definition of quasiconformality and related concepts please see Section~\ref{S:Prelim}.
\begin{corollary}\cite[Corollary~1.6]{LN24}\label{C:GLS}
	There exist a gasket Kleinian limit set
	$\Lambda(G)$, open sets $U, V \subset \wh\C$ such that  $U\cap \Lambda(G),V \cap \Lambda(G)$ are non-empty connected sets, and a quasiregular map $f \: U \to V$ of degree greater than one with $f(U \cap \Lambda(G)) = V \cap \Lambda(G)$.
\end{corollary}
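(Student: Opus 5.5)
The plan is to derive Corollary~\ref{C:GLS} from the theorem above, \cite[Theorem~1.5]{LN24}, by composing local quasiconformal identifications with the dynamics of the rational map $f$. Let $\Lambda(G)$ and $\mathcal J(f)$ be the gasket limit set and gasket Julia set provided by that theorem.

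\textbf{Step 1 (a degree-two piece of the dynamics).} Choose a point $w\in\mathcal J(f)$ that is not a critical value of any iterate in the relevant range and has a preimage $w'\in\mathcal J(f)$ with $f(w')=w$. Choose a small open topological disk $V'$ around $w$ such that $V'\cap\mathcal J(f)$ is connected; this is possible because a gasket is locally connected. By choosing $V'$ suitably, for instance so that it contains exactly one critical value $c\in\mathcal J(f)$ when the Julia set contains a critical value, we take a component $U'$ of $f^{-1}(V')$ on which $f\colon U'\to V'$ is a branched covering of degree $d\ge 2$. If no critical point lies in $\mathcal J(f)$, a simple critical point near (but off) the Julia set would still let us enclose it together with a Julia piece. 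The full invariance $f^{-1}(\mathcal J(f))=\mathcal J(f)$ gives $f(U'\cap\mathcal J(f))=V'\cap\mathcal J(f)$. We also need $U'\cap\mathcal J(f)$ to be connected. I expect this to follow because a proper branched cover whose base piece is connected and contains the branch value has a connected preimage piece, though it must be checked.

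\textbf{Step 2 (transfer to the Kleinian side).} The theorem is stated for neighborhoods of points $z\in\Lambda(G)$. To get local maps onto neighborhoods of arbitrary points of $\mathcal J(f)$, I would use the fact that the maps in \cite{LN24} are obtained from a global correspondence between contact graphs. Alternatively, apply the theorem at a point $z$ whose image lies near $w$, and then use the expanding dynamics of $f$ and of $G$ to move neighborhoods around. Either way we obtain quasiconformal maps $\phi,\psi$ of $\wh\C$ with
\[
\phi(U\cap\Lambda(G))=U'\cap\mathcal J(f),\qquad \psi(V\cap\Lambda(G))=V'\cap\mathcal J(f),
\]
where $U=\phi^{-1}(U')$ and $V=\psi^{-1}(V')$, after shrinking $U',V'$ compatibly. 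Shrinking to a smaller neighborhood of $w$ and pulling back preserves both connectedness and the degree-$d$ cover, as long as the critical value stays inside.

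\textbf{Step 3 (compose).} Define $F=\psi^{-1}\circ f\circ\phi\colon U\to V$. This is a composition of quasiconformal maps with a holomorphic branched cover of degree $d\ge2$, so it is quasiregular of degree $d$. It satisfies $F(U\cap\Lambda(G))=\psi^{-1}(V'\cap\mathcal J(f))=V\cap\Lambda(G)$, and both sets are connected.

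The main obstacle is Step 2 together with the need for nonempty connected intersections. The theorem as quoted only guarantees a neighborhood of each limit point, without control over which Julia point it lands on. I would first try to make the neighborhood at $z$ large enough to contain a degree-$\ge2$ piece. Failing that, I would use the self-similarity of $\mathcal J(f)$ (iterating $f$ near a repelling point) to propagate the local quasiconformal identification to a neighborhood containing a branched piece.
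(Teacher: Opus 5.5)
The paper gives no proof of this corollary. It quotes it from Luo--Ntalampekos and only remarks that there $U\cap\Lambda(G)$ is a cyclic chain of four connected pieces $A_0,\dots,A_3$, consecutive ones meeting in a single point, with the map M\"obius on each piece. So their degree-two map wraps a necklace twice around a complementary disk; it is not a branched map near a single point.

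Your strategy of conjugating a degree-$d$ piece of the rational map (call it $R$, to avoid the clash with the quasiregular map) by local quasiconformal charts is reasonable, but Step~1 fails in exactly the case that matters. Nothing forces a critical point of $R$ into $\mathcal J(R)$; for critically fixed maps, for instance, every critical point is a superattracting fixed point inside a Fatou component. Suppose the only critical value $v$ in your topological disk $V'$ lies in a Fatou component $\Omega'$, and $V'\not\supseteq\overline{\Omega'}$. Cut $V'$ along an arc in $\Omega'$ from $v$ to $\partial V'$. On the slit disk $R$ has $d$ disjoint inverse branches. Hence $U'\cap\mathcal J(R)$ splits into $d$ disjoint, relatively open copies of $V'\cap\mathcal J(R)$, and is disconnected. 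Your connectivity heuristic covers only the case where the branch value lies in $V'\cap\mathcal J(R)$ and is totally ramified in $U'$. In the fallback case, getting a connected piece of degree $\ge 2$ forces $V'$ to surround an entire closed Fatou component, hence a whole peripheral circle of $\mathcal J(R)$. So no small neighborhood of a Julia point works, which is consistent with the necklace structure above.

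This makes Step~2, which you flag as the obstacle but do not carry out, the real content of the proof. The quoted theorem gives, for each $z\in\Lambda(G)$, a neighborhood $U_z$ of uncontrolled size around an uncontrolled point $\phi(z)\in\mathcal J(R)$. It says nothing about neighborhoods of prescribed Julia points, let alone of a whole $\overline{\Omega'}$. The contact-graph correspondence you invoke is also not part of that statement.

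The dynamical route can be made to work, but it needs an actual argument. You must produce $n$ and a pullback chain $V'=V_0,V_1,\dots,V_n$ with each $R\colon V_{k+1}\to V_k$ univalent and $V_n\subset\phi(U_z)$. Then $R^n\circ\phi$ restricted to $\phi^{-1}(V_n)$ serves as the chart $\psi$, and you need the same for $U'$. Building such chains requires three ingredients:
\begin{itemize}
\item density of iterated preimages of $\Omega'$ in $\mathcal J(R)$;
\item diameters of Fatou components tending to zero, which follows from local connectivity;
\item control over which preimages pass through critical Fatou components.
\end{itemize}
None of this is in the proposal, so as written it does not establish the corollary.
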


In fact, in the construction in \cite{LN24} the subset $A = U\cap\Lambda(G)$ decomposes into 4 connected pieces $A_0,\dots A_3$, where $A_i\cap A_{i+1}$, for every $i\in\Z \mod 4$, is a single point and $f|_{A_i}$ is a M\"obius map.

In a similar direction, Y.~Luo, M.~Mj, and S.~Mukherjee~\cite{LMM26} showed that the fat Basilica Julia set of $f(z)=z^2-3/4$ is quasiconformally equivalent, via a global quasiconformal homeomorphism of the plane, to the limit set of a Kleinian group.

A \emph{Sierpi\'nski carpet}, or \emph{carpet} for short, is a metrizable to\-po\-lo\-gi\-cal space homeomorphic to the standard Sierpi\'nski carpet (aka the Sierpi\'nski curve) obtained by subdividing the unit square into nine equal subsquares, removing the interior of the middle subsquare, performing the above operations on the remaining eight subsquares, and continuing indefinitely. Similar to a gasket, it is a compact, connected, locally connected space of topological dimension one, but, unlike a gasket, the closures of any two complementary components of a carpet do not intersect. In particular, there is no contact graph, i.e., no underlying combinatorial structure, for a carpet, and there is only one to\-po\-lo\-gi\-cal carpet type.    
For global quasiconformal maps between Sierpi\'nski carpet Julia sets of postcritically finite rational maps M.~Bonk, M.~Lyubich and the author~\cite{BLM16} proved the following theorem.
\begin{theorem}\cite[Theorem~1.4]{BLM16}
	If $f, g$ are postcritically finite  rational  maps whose respective Julia sets $\mathcal J(f), \mathcal J(g)$ are Sierpi\'nski carpets, then any quasiconformal map $\xi\co\wh\C\to\wh\C$ such that $\xi(\mathcal J(f))=\mathcal J(g)$ is the restriction to $\mathcal J(f)$ of a M\"obius transformation.
\end{theorem}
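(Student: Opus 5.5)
The plan is to pass to round carpets, where rigidity is available, and then to use the dynamics as a ``conformal elevator'' to transfer conformality back to $\xi$. I have not checked the final step in detail.

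\emph{Step 1 (geometry of the carpets).} For a postcritically finite $f$ with carpet Julia set, every Fatou component is eventually periodic and superattracting. Subhyperbolicity then gives a conformal elevator: for every small ball $B(x,r)$ with $x\in\mathcal J(f)$ there is an iterate $f^n$ mapping a comparable ball onto a ball of definite size, with degree bounded independently of $x,r$ and with controlled distortion. A standard argument with this elevator shows three things: the peripheral circles of $\mathcal J(f)$ (the boundaries of Fatou components) are uniform quasicircles, they are uniformly relatively separated, and $\mathcal J(f)$ has Lebesgue measure zero. The same holds for $g$.

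\emph{Step 2 (uniformization and rigidity).} By Bonk's uniformization theorem there are quasiconformal maps $\beta_f,\beta_g$ of $\wh\C$ that send $\mathcal J(f),\mathcal J(g)$ onto round carpets $S_f,S_g$ of measure zero. The map $\xi$ preserves the Julia sets, so it maps peripheral circles to peripheral circles. Hence $\beta_g\circ\xi\circ\beta_f^{-1}$ is a quasisymmetric map $S_f\to S_g$. By the Bonk--Kleiner--Merenkov rigidity theorem for round carpets of measure zero, this map is the restriction of a M\"obius transformation $M$, and its local version applies to locally quasisymmetric maps. Applying the local version to the conjugated dynamics $F=\beta_f\circ f\circ\beta_f^{-1}$ on $S_f$ shows that $F$ is piecewise M\"obius near every point away from the critical points, and likewise for $G=\beta_g\circ g\circ\beta_g^{-1}$.

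\emph{Step 3 (transfer conformality back to $\xi$).} We want to show that $\xi|_{\mathcal J(f)}$ agrees with a M\"obius map. Since $\mathcal J(f)$ has measure zero, it suffices to produce a quasiconformal extension of $\xi|_{\mathcal J(f)}$ that is conformal on every Fatou component. Fix $x\in\mathcal J(f)$ and scales $r\to0$, and elevate at both ends. The maps
\[
\xi_k=g^{m_k}\circ\xi\circ(f^{n_k})^{-1},
\]
with the appropriate branches, are uniformly quasiconformal. Each sends a piece of $\mathcal J(f)$ of definite size onto a piece of $\mathcal J(g)$ of definite size.

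By the finiteness of the postcritical sets, the configurations at the top of the elevator range over a compact family. Every subsequential limit of the $\xi_k$ is again quasiconformal and preserves the Julia sets, so by Step 2 it is M\"obius in round coordinates. Because $\beta_f$ and $\beta_g$ are fixed, the $\xi_k$ should then fall into finitely many classes. Using the local injectivity of the elevators, this should force a local relation $g^{m}\circ\xi=\xi'\circ f^{n}$ between $\xi$ and a map $\xi'$ drawn from a finite list. With such a relation, $\xi$ on each peripheral circle can be written through B\"ottcher coordinates of the periodic Fatou components, which are rotation-compatible. One then replaces $\xi$ on each Fatou component by the conformal extension determined by these coordinates. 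The resulting map is quasiconformal, agrees with $\xi$ on $\mathcal J(f)$, and is conformal off a null set, hence M\"obius.

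\emph{Main obstacle.} I expect the hardest part to be Step 3: showing that the elevated maps form an essentially finite family and extracting the functional equation, rather than only a subsequential limit. My first attempt would use the fact that a Möbius self-map of a round carpet of measure zero that fixes three peripheral circles is the identity. This should force the limits to stabilize after finitely many steps, which would give the exact conjugacy relation on an open piece. That relation would then be propagated to all of $\mathcal J(f)$ by density of backward orbits.
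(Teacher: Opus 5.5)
\textbf{Verdict: the proposal has a genuine gap.} The paper only quotes this result from \cite{BLM16}, so I am judging your outline on its own. The central step is missing, and part of the argument rests on a false claim.

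\textbf{The false claim in Step~2.} Applying Theorem~\ref{T:BKM} globally to $\beta_g\circ\xi\circ\beta_f^{-1}$ is correct. But it only rewrites $\xi|_{\mathcal J(f)}$ as $\beta_g^{-1}\circ M\circ\beta_f$, which says nothing about $\xi$ itself, because $\beta_f,\beta_g$ are merely quasiconformal. The ``local version'' you then invoke, which would produce M\"obius maps, is false, and your own $F=\beta_f\circ f\circ\beta_f^{-1}$ is a counterexample:
\begin{itemize}
\item Near each non-critical point, $F$ is a local quasiconformal map with $F(U\cap S_f)=F(U)\cap S_f$, by complete invariance of $\mathcal J(f)$.
\item Suppose $F$ agreed with a M\"obius map on each such $U\cap S_f$. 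Nonempty relatively open subsets of $S_f$ are infinite, and $S_f$ minus finitely many points is connected. So all these M\"obius maps would coincide.
\item Then $F|_{S_f}$ would be injective, contradicting $\deg f\ge 2$.
\end{itemize}
The correct local statement is Theorem~\ref{T:SchM}: such maps are only differentiable along the carpet with continuous derivative. Local M\"obius rigidity as in Theorem~\ref{T:Groups} needs Kleinian groups on both sides.

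\textbf{Consequences for Step~3.} The $\xi_k$ and their subsequential limits are maps between pieces of the carpets. So nothing makes the limits M\"obius in round coordinates. Your three-circle fact (an orientation-preserving M\"obius map preserving three disjoint closed disks has three fixed points) applies only to global M\"obius self-maps.

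What you actually need is a discreteness statement for local maps. For example: take a non-postcritical repelling periodic point $x$ on the boundary of a periodic Fatou component $D$. Let $h$ be the inverse branch of $f^p$ fixing $x$ on a disk $W$, and set $\xi_k=g^{m_k}\circ\xi\circ h^k$. The needed claim is that the restrictions of the $\xi_k$ to $W\cap\mathcal J(f)$ form a finite set. This is the heart of the theorem, and your outline gives no argument for it. Uniform closeness alone does not provide what Theorem~\ref{T:Rig} or Corollary~\ref{C:Uni} require: a point of first-order coincidence, or a coincidence set with an accumulation point.

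\textbf{The relation you extract is not the one needed.} A relation $g^m\circ\xi=\xi'\circ f^n$, with $\xi'$ drawn from a finite list, does not feed the B\"ottcher argument; you need $\xi$ on both sides. Finiteness would give this by pigeonhole:
\begin{itemize}
\item $\xi_k=\xi_l$ yields $g^{m_k}\circ\xi\circ f^{N}=g^{m_l}\circ\xi$ near $x$, with $N=(l-k)p$.
\item Corollary~\ref{C:Uni}, applied in round coordinates, propagates this to all of $\mathcal J(f)$.
\item Then $\Psi=g^{m_k}\circ\xi|_{\partial D}$ satisfies $\Psi\circ f^N=g^{m_l-m_k}\circ\Psi$, landing on a periodic circle.
\item The functional equation of power maps, in Riemann coordinates centred at the finite-orbit points of the Fatou components, forces $\xi|_{\partial D}$ to be a rotation.
\item Iterating the global relation handles the remaining Fatou components.
\end{itemize}

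\textbf{Gluing.} Assembling these conformal extensions into a global quasiconformal map also needs a cited lemma, not just an assertion. As it stands, Step~3 asserts the conclusion rather than proving it.
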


Since a Kleinian group acts on its limit set, the following is an immediate corollary.

\begin{corollary}\cite[Corollary~1.3]{BLM16}\label{C:SCGJ}
	Let $f$ be a postcritically finite rational map whose Julia set $\mathcal J(f)$ is a Sierpi\'nski carpet. Then $\mathcal J(f)$ is not quasiconformally equivalent to the limit set $\Lambda(G)$ of a convex-cocompact Kleinian group $G$ via a global quasiconformal map of the Riemann sphere.
\end{corollary}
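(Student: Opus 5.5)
The statement is presented as an immediate consequence of the preceding theorem from \cite{BLM16}. I would argue by contradiction, assuming a quasiconformal map $\xi\co\wh\C\to\wh\C$ with $\xi(\Lambda(G))=\mathcal J(f)$. The idea is to transport the group action on the limit set over to the Julia set. For each $\gamma\in G$, set $\psi_\gamma=\xi\circ\gamma\circ\xi^{-1}$. This is a quasiconformal self-map of $\wh\C$, being a composition of quasiconformal maps and a M\"obius map, and it satisfies $\psi_\gamma(\mathcal J(f))=\mathcal J(f)$ because $\Lambda(G)$ is $G$-invariant.

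Next I would apply \cite[Theorem~1.4]{BLM16} with $g=f$. It says that $\psi_\gamma|_{\mathcal J(f)}$ is the restriction of a M\"obius transformation. The same theorem, cited in the introduction, shows that the group of M\"obius maps preserving a carpet Julia set of a postcritically finite map is finite. The reason is that these maps form a discrete group: they act as quasisymmetries, and peripheral circles are rigid. The homomorphism $\gamma\mapsto\psi_\gamma|_{\mathcal J(f)}$ is injective, since $\gamma|_{\Lambda(G)}=\mathrm{id}$ forces $\gamma=\mathrm{id}$ when $\Lambda(G)$ contains more than two points. Hence $G$ would embed in a finite group.

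This contradicts $G$ being infinite. Indeed, a convex-cocompact Kleinian group whose limit set is a carpet is non-elementary, since its limit set is infinite.

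The main obstacle is the finiteness of the Möbius symmetry group of $\mathcal J(f)$. I would establish it as follows. The peripheral circles of $\mathcal J(f)$ have diameters accumulating only at $0$, so there are finitely many circles of diameter above any given threshold. A M\"obius map preserving $\mathcal J(f)$ permutes the peripheral circles. Choosing three large ones, only finitely many M\"obius images of them are possible. Moreover, a M\"obius map is determined by its action on three circles up to finitely many choices. So the symmetry group is finite.

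If this argument turns out to need uniform bounds, I would instead use the uniform quasisymmetry of the family $\{\psi_\gamma\}$. Rigidity in \cite{BLM16} would then give compactness of the family, and combined with its discreteness, finiteness.
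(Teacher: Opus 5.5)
Your reduction is what the paper means by an ``immediate corollary.'' The map $\psi_\gamma=\xi\circ\gamma\circ\xi^{-1}$ is a global quasiconformal map preserving $\mathcal J(f)$, and \cite[Theorem~1.4]{BLM16} makes $\psi_\gamma|_{\mathcal J(f)}$ M\"obius. The assignment $\gamma\mapsto\psi_\gamma|_{\mathcal J(f)}$ is an injective homomorphism, and $G$ is infinite.

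The contradiction, however, rests entirely on finiteness of the group of M\"obius self-maps of $\mathcal J(f)$, and here there is a genuine gap. That finiteness is not part of Theorem~1.4 as stated. It is a separate main result of \cite{BLM16}, the finiteness of the quasisymmetry group of $\mathcal J(f)$. Neither of your justifications for it works:
\begin{itemize}
\item ``Discrete, hence finite'' is false: $G$ itself is an infinite discrete group of M\"obius maps preserving a carpet.
\item Your counting argument breaks at ``only finitely many M\"obius images of three large circles are possible.'' Nothing forces a M\"obius symmetry to send a large peripheral circle to a large one. The argument never uses that $\mathcal J(f)$ is a Julia set, so it would equally show that $\Lambda(G)$ has a finite M\"obius symmetry group. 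But for loxodromic $\gamma\in G$ and a peripheral circle $C$, the circles $\gamma^n(C)$ have diameters tending to $0$.
\end{itemize}

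Your fallback is circular. The family $\{\psi_\gamma\}$ is uniformly quasiconformal, but such a family is normal only under a normalization, for example three points whose images stay uniformly separated. This is exactly what fails for $\psi_{\gamma^n}$, which converge to a constant away from a single point. Since the maps $\psi_\gamma|_{\mathcal J(f)}$ are M\"obius anyway, compactness of the family is just a restatement of finiteness.

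What is missing is an input specific to postcritically finite dynamics that prevents a symmetry of $\mathcal J(f)$ from shrinking a large peripheral circle. This is what \cite{BLM16} supplies, so the simplest fix is to cite its finiteness theorem for the quasisymmetry group of $\mathcal J(f)$. Alternatively, the statement is the special case $U=V=\wh\C$ of Theorem~\ref{T:Main}, whose proof does not use this corollary.
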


The following local result proved by the author~\cite{Me14a} contrasts Corollary~\ref{C:GLS} when limit sets are Sierpi\'nski carpets rather than gaskets. 

\begin{theorem}\cite[Theorem~1.1]{Me14a}\label{T:Groups}
	Let $G, \widetilde G$ be convex-cocompact Kleinian groups
	with $\Lambda(G), \Lambda(\widetilde G)$ being round Sierpi\'nski carpets, i.e., carpets in $\wh\C$ whose complementary components are open geometric disks. If $U$ and $V$ are open sets such that $U\cap \Lambda(G)$ and $V\cap\Lambda(\widetilde G)$ are non-empty and connected, and if
	$f\colon U\cap \Lambda(G) \to V\cap\Lambda(\widetilde G)$ is a quasiregular map, then there exists a M\"obius transformation
	$\mu$ that takes $\Lambda(G)$ onto $\Lambda(\widetilde G)$, and such that
	$$\mu|_{U\cap\Lambda(G)}=f.$$
	\end{theorem}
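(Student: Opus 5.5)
This is a rigidity statement, and I plan to prove it by turning the local quasiregular map into a global Möbius map. Quasisymmetric maps between round carpets are well understood, and I will combine that with the dynamics of the two groups.

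\emph{Step 1: local homeomorphism and quasisymmetry.} A quasiregular map is discrete and open. Restricted to the carpet piece $U\cap\Lambda(G)$, it is a local homeomorphism away from a discrete branch set. I would first show there are no branch points. Near a branch point $p$, a small neighborhood of $p$ in the carpet is mapped $k$-to-$1$ with $k\ge 2$. The peripheral circles through small neighborhoods then give a contradiction, because peripheral circles must go to peripheral circles: these are exactly the non-separating Jordan curves of a carpet, a purely topological characterization. Since $f$ is locally quasiconformal, it is locally quasisymmetric on the carpet. So, shrinking $U$ if necessary, we get a quasisymmetric homeomorphism between relatively open pieces of the two carpets.

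\emph{Step 2: conformal rigidity.} I would then use the uniformization and rigidity theory for round carpets of measure zero, as in Bonk--Kleiner--Merenkov. The limit set of a convex-cocompact group with carpet limit set has measure zero. The key is a local version of the statement that quasisymmetric maps between round carpets of measure zero are Möbius. On a small piece, the map sends peripheral circles to peripheral circles. Consider the ``Schottky set'' reflection group generated by reflections in the peripheral circles. The map extends equivariantly, by successive reflections across peripheral circles, to a quasiconformal map on an open set of the sphere. Its Beltrami coefficient is invariant under these reflections. Because the carpet has measure zero, the orbits of the complementary disks fill the neighborhood up to measure zero, and this forces the dilatation to be zero. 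Hence $f$ is the restriction of a Möbius transformation $\mu$ near a point.

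\emph{Step 3: analytic continuation via the groups.} Now $\mu(\Lambda(G))$ and $\Lambda(\widetilde G)$ agree on an open piece. I would use cocompactness of $G$ on the convex hull to say that every point of $\Lambda(G)$ has a neighborhood mapped by some group element onto a neighborhood of our fixed small piece. Equivalently, the group $\mu G\mu^{-1}$ and $\widetilde G$ both preserve the common local structure. The set of Möbius maps preserving a small open piece of a round carpet is discrete. Commensurability-type arguments then show $\mu\Lambda(G)=\Lambda(\widetilde G)$: the set where the two limit sets coincide locally is open, closed and nonempty in $\Lambda(G)$. Finally, since $U\cap\Lambda(G)$ is connected and $f$ agrees with $\mu$ on an open subset, the identity principle from Step 2, applied at each point, gives $\mu=f$ on all of $U\cap\Lambda(G)$.

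I expect the main obstacle to be Step 2, making the reflection-extension argument work locally rather than globally. The remaining pieces are comparatively routine topology and group theory.
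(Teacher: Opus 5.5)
\textbf{Step~2 is a genuine gap, and it cannot be repaired as a purely local argument.} You correctly flag it as the crux. The reflection extension is defined only at points whose orbit under successive reflections stays inside $U$. Take a small peripheral disk $D_i$ of radius $\rho$ near $p$. Reflection in $\partial D_i$ sends the points of $D_i$ within distance roughly $\rho^2/\mathrm{dist}(p,\partial U)$ of its center out of $U$, where $f$ carries no information about $\Lambda(\wt G)$. These holes, together with all their reflected copies, have positive area and accumulate at every point of $U\cap\Lambda(G)$. So the extension does not live on an open set, the reflected copies do not fill a neighborhood up to measure zero, and the dilatation on the holes is unconstrained.

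This is not a technicality. Let $S$ be obtained from a disk $D$ by removing open disks with pairwise disjoint closures in $D$, with $S$ of measure zero. Prescribe any Beltrami coefficient $\nu$ on $\wh\C\setminus D$, transport it into $D$ by the reflection pseudogroup, and solve the Beltrami equation. The solution $F$ conjugates each reflection to an anticonformal involution, so it maps every peripheral circle onto a round circle. Yet $F|_S$ is M\"obius only when the quasiconformal self-map of $\wh\C\setminus\overline D$ with dilatation $\nu$ has M\"obius boundary values: if $M^{-1}\circ F=\mathrm{id}$ on $S$, then it fixes $\partial D\subset\overline S$ pointwise. Your Step~2 never uses the groups, so it would also ``prove'' rigidity here, which is false. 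Locally, one only gets Theorem~\ref{T:SchM} (with \cite[Lemma~2.1]{Me14a}): $f$ is a Schottky map, differentiable along $\Lambda(G)$ with continuous derivative off its discrete branch set, and nothing more.

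\textbf{The missing idea is to combine the dynamics of both groups with the rigidity of Schottky maps.} The paper does not reprove this theorem; it quotes it from \cite{Me14a}. Its toolbox nevertheless shows a route. Both limit sets are locally porous, so by Theorem~\ref{T:Rig} and Corollary~\ref{C:Uni} a Schottky map on a connected piece is determined by its value and derivative at one point. Choose $p$ to be a non-branch point that is the repelling fixed point of a loxodromic $h\in G$; such points are dense. Use convex-cocompactness (conical limit points) to pick $\tilde g_n\in\wt G$ renormalizing at $f(p)$ at the matching scales. Because $f$ is differentiable along $\Lambda(G)$ at $p$, the maps $\tilde g_n\circ f\circ h^{-n}$ then converge to a M\"obius map $\nu$, which by Hausdorff convergence carries $\Lambda(G)$ onto $\Lambda(\wt G)$. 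A rescaling argument as in the proof of Theorem~\ref{T:Rig} (a translation-invariant weak tangent contradicts porosity) forces $\nu^{-1}\circ\tilde g_n\circ f\circ h^{-n}=\mathrm{id}$ for large $n$. Hence $f=\tilde g_n^{-1}\circ\nu\circ h^{n}$ near $p$, and Corollary~\ref{C:Uni} spreads this to all of $U\cap\Lambda(G)$.

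This route also makes your Steps~1 and~3 unnecessary, which matters because both are flawed:
\begin{itemize}
\item Step~1: the topological no-branching argument gives no contradiction at a branch point lying on no peripheral circle. A branched cover of a carpet over such a point is again a carpet, and small peripheral circles still map homeomorphically onto peripheral circles.
\item Step~3: your open--closed argument for $\mu(\Lambda(G))=\Lambda(\wt G)$ offers no reason for closedness. Local agreement of two limit sets propagating to global agreement is itself an instance of the theorem.
\item The identity principle you invoke at the end does not come out of Step~2. It is Corollary~\ref{C:Uni}, which relies on local porosity.
\end{itemize}
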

	
In other words, round carpet limit sets of convex-cocompact Kleinian groups support no branched quasiregular maps, even locally. In this note we establish a local analogue of Corollary~\ref{C:SCGJ} by proving the following theorem.

\begin{theorem}\label{T:Main}
	Let $G$ be a convex-cocompact Kleinian group and $f$ be a postcritically finite rational map. Assume that the limit set $\Lambda(G)$ of $G$ and  the Julia set $\mathcal J(f)$ of $f$ are Sierpi\'nski carpets. Then there does not exist a quasiconformal map $\xi\co U\cap \Lambda(G)\to V\cap \mathcal J(f)$, where $U, V$ are open sets such that the intersections $U\cap \Lambda(G)$ and $V\cap \mathcal J(f)$ are non-empty and connected.
\end{theorem}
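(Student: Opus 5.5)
Suppose, for contradiction, that $\xi\co U\cap\Lambda(G)\to V\cap\mathcal J(f)$ is a quasiconformal map, with both intersections non-empty and connected. The idea is to use the two dynamical structures (the group action near points of $\Lambda(G)$, the expanding dynamics of $f$ near points of $\mathcal J(f)$) to spread $\xi$ into a global map, and then reach a contradiction with Corollary~\ref{C:SCGJ} and the rigidity results of \cite{BLM16}.

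\textbf{Step 1: blowing up on the Julia side.} Since $f$ is postcritically finite and $\mathcal J(f)$ is a carpet, $f$ has no critical points on $\mathcal J(f)$ and is expanding near the Julia set with respect to the orbifold metric. Choose a peripheral circle $C$ of $\mathcal J(f)$ (a boundary of a Fatou component) meeting $V$. I will pass to a smaller open set $V'\subset V$ such that $V'\cap\mathcal J(f)$ is connected and iterates $f^n|_{V'}$ are injective; these exist by local injectivity of $f$ near $\mathcal J(f)$. For large $n$, $f^n(V'\cap\mathcal J(f))$ then covers all of $\mathcal J(f)$. The composition $f^n\circ\xi$ is a quasiregular map (locally quasiconformal) from a piece of $\Lambda(G)$ onto $\mathcal J(f)$, with uniformly bounded distortion by Koebe-type distortion estimates for the univalent inverse branches.

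\textbf{Step 2: blowing up on the group side.} Convex cocompactness gives a conical-limit-point argument. Take a point $p\in U\cap\Lambda(G)$ and group elements $g_k$ such that $g_k^{-1}$ maps shrinking neighborhoods of $p$ onto sets of definite size with bounded distortion. Combine this with the inverse branches of $f^{n_k}$ near $\xi(p)$, chosen at the matching scale. The renormalized maps
\[
\xi_k=f^{n_k}\circ\xi\circ g_k
\]
are then uniformly quasiconformal on domains of definite size. Normalize by three points on distinct peripheral circles, which is possible since peripheral circles are preserved by quasiconformal maps between carpets. A subsequence then converges to a quasiconformal map $\eta$ from an open piece of $\Lambda(G)$ onto an open piece of $\mathcal J(f)$.

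\textbf{Step 3: rigidity and the contradiction.} The limit $\eta$ conjugates the local dynamics in the following sense. Postcomposing with $f$ and precomposing with suitable group elements stabilizing peripheral circles keeps us in the same compact family. Hence $\eta$ intertwines the stabilizer of a peripheral circle of $\Lambda(G)$, which contains a loxodromic or Fuchsian subgroup acting cocompactly on the circle, with the dynamics of $f$ on the corresponding Fatou boundary circle, which is conjugate to $z\mapsto z^d$ on the unit circle. Following \cite{BLM16}, I would use the resulting quasisymmetric conjugacy on the circle together with the carpet modulus (Schottky set) rigidity. This would show that a group element fixing a point of a peripheral circle corresponds under $\eta$ to a map on $\mathcal J(f)$ that is locally a homeomorphism with a fixed point of multiplier modulus $\neq$ the correct value. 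Alternatively, it would produce a M\"obius map $\mu$ with $\mu\circ g\circ\mu^{-1}$ preserving $\mathcal J(f)$ for infinitely many $g$. The latter contradicts the finiteness of the group of quasiconformal self-maps of a pcf carpet Julia set from \cite{BLM16}, since Theorem~1.4 there makes every such map M\"obius, and only finitely many M\"obius maps preserve $\mathcal J(f)$.

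\textbf{The main obstacle} is Step 3: converting the local conjugacy produced by the blow-up into an infinite family of quasiconformal self-maps of $\mathcal J(f)$. The group side supplies infinitely many local symmetries, and I will first try to transport them through $\eta$ and extend them to global quasiconformal maps of $\mathcal J(f)$, using the expanding dynamics as in Step 1. The difficulty is ensuring these extensions stay uniformly quasiconformal and remain distinct, so that they genuinely contradict the finiteness result from \cite{BLM16}.
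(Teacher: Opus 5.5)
There is a genuine gap, and it is the one you flag yourself. Step~3 carries the whole proof and is not supplied, and Steps~1--2 do nothing to set it up.

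The blow-up limit $\eta$ of $f^{n_k}\circ\xi\circ g_k$ is just another quasiconformal map from a connected piece of $\Lambda(G)$ onto a piece of $\mathcal J(f)$. That is exactly the kind of object you assumed exists. Nothing in the renormalization produces an equivariance relation, so there is no reason for $\eta$ to intertwine a peripheral-circle stabilizer with $f$. A literal conjugacy is impossible anyway: the stabilizer acts by homeomorphisms of the circle, whereas the first-return map of $f$ on a periodic Fatou boundary is a covering of degree $d\ge 2$.

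Transporting group elements $g$ to the Julia side only gives maps $\eta\circ g\circ\eta^{-1}$ defined on small pieces of $\mathcal J(f)$. You have no functional equation tying them to $f$ that would extend them to global quasiconformal maps of $\wh\C$ preserving $\mathcal J(f)$. The finiteness and rigidity results of \cite{BLM16} are global and cannot be invoked for such local maps.

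Step~1 is also inconsistent as written. If $f^n$ were injective on $V'$ with $f^n(V'\cap\mathcal J(f))=\mathcal J(f)$, the inverse branch would map the compact set $\mathcal J(f)$ homeomorphically onto the relatively open set $V'\cap\mathcal J(f)$. That forces $\mathcal J(f)\subset V'$, which is incompatible with $\deg f^n\ge 2$.

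The missing idea is to transport in the other direction: move the dynamics of $f$ to the group side, where a local rigidity theorem is available. The paper argues as follows.
\begin{enumerate}
\item Uniformize both carpets to Schottky sets by Theorem~\ref{T:Bo}. By Theorem~\ref{T:BKM}, the conjugated group is still a convex-cocompact Kleinian group, now with round carpet limit set $\Lambda$. Meanwhile $f$ becomes a uniformly quasiregular Schottky map on $\mathcal J$.
\item Pick a repelling fixed point $\tilde p\in V\cap\mathcal J$ of an iterate of $f$, and set $p=\xi^{-1}(\tilde p)$. Then $\xi^{-1}\circ f\circ\xi$ is a quasiconformal map defined on a connected piece $U'\cap\Lambda$ with values in $\Lambda$.
\item Theorem~\ref{T:Groups} makes this map the restriction of a loxodromic M\"obius map $\mu$ preserving $\Lambda$, with repelling fixed point $p$.
\item Iterating the identity $\xi=f\circ\xi\circ\mu^{-1}$ extends $\xi$ to a quasiregular (Schottky) map away from the attracting fixed point $q$ of $\mu$.
\item Repeat this with a second repelling fixed point of $f$, and glue the two extensions by Corollary~\ref{C:Uni}. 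Theorem~\ref{T:Rig} disposes of the case where the attracting fixed points coincide. This removes $q$.
\item Once $\xi$ has a finite-degree extension on $\Lambda$, the same identity gives a contradiction by counting degrees, since $\mu$ has degree one and $f$ has degree at least two.
\end{enumerate}
It is this degree mismatch between a M\"obius map and $f$ that rules out $\xi$. Your outline never produces a relation capable of exhibiting it.
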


In Section~\ref{S:Prelim} we discuss relevant classes of deformations and pre\-li\-mi\-na\-ry results in order to give a proof of Theorem~\ref{T:Main} in Section~\ref{S:Proof}. 

\medskip
\noindent
{\bf Acknowledgments.} 
The author thanks Yusheng Luo for useful discussions related to the project.
The author also thanks the Institute for Ma\-the\-ma\-ti\-cal Sciences at Stony Brook University for its hospitality.

\section{Preliminaries}\label{S:Prelim}

\noindent
Recall that a \emph{Sierpi\'nski carpet} $C$ is a topological space homeomorphic to the classical Sierpi\'nski curve $S$. The images of the boundaries of the removed squares in the construction of $S$ under any such homeomorphism are called \emph{pe\-ri\-phe\-ral circles}.
A \emph{Schottky set} $S$ in $\wh\C$ is a complement in $D$ of a collection of open geometric disks $D_i,\ i\in I$, with disjoint closures, i.e., 
$$
S=\wh\C\setminus \cup_{i\in I}D_i,
$$
with $\overline{D_i}\cap{\overline D_j}=\emptyset,\ i,j\in I,\  i\neq j$. The boundaries $\partial D_i,\ i\in I$, are also referred to as the \emph{peripheral circles} of $S$. 

A non-constant map $f\co U\to \wh\C$ defined on a domain $U\sub \wh\C$ is called \emph{$K$-quasiregular}, $K\geq 1$, if $f$ is in the Sobolev space $W_\mathrm{loc}^{1,2}(U)$, and for almost every $z\in U$,
$$
\left\lVert Df(z)\right\rVert^2\leq K\mathrm{det}(Df(z)).
$$
A map $f$ is \emph{quasiregular} if it is  $K$-quasiregular for some $K\geq 1$. 
If a $K$-quasiregular map $f$ is a homeomorphism onto its image, then $f$ is called $K$-\emph{quasiconformal}, or just \emph{quasiconformal}. A 1-quasiconformal map is {conformal} by Weyl's Lemma.

The following theorem due to M.~Bonk, B.~Kleiner and the author~\cite{BKM09} will be used in the proof of Theorem~\ref{T:Main} below.
\begin{theorem}\cite[Theorem~1.1]{BKM09}\label{T:BKM}
	If $S$ is a Schottky set of measure zero, then for any quasiconformal map $f$ of $\wh\C$ such that the image $f(S)$ is a Schottky set, its restriction to $S$ is a M\"obius transformation.
	\end{theorem}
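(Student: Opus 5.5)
The natural route is to extend $f$ by reflections, so that the extended map is still quasiconformal but conformal almost everywhere. Then Weyl's Lemma makes it M\"obius. Write $S=\wh\C\setminus\bigcup_{i\in I}D_i$ and $f(S)=\wh\C\setminus\bigcup_{i\in I}D_i'$. We may label so that $f(\partial D_i)=\partial D_i'$, since a homeomorphism of $\wh\C$ carrying one Schottky set onto another carries peripheral circles to peripheral circles. Let $R_i$ and $R_i'$ denote the reflections in $\partial D_i$ and $\partial D_i'$.

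\textbf{Step 1 (reflection extension).} Define $F_1=f$ on $S$ and $F_1=R_i'\circ f\circ R_i$ on each $D_i$. Since $f|_{\partial D_i}$ agrees with both definitions, $F_1$ is a homeomorphism of $\wh\C$. By removability of circles for quasiconformal maps, it is quasiconformal with the same dilatation bound as $f$, because the reflections are anticonformal. Inside $D_i$, the new map sends the reflected Schottky set $R_i(S)$ onto $R_i'(f(S))$, and the reflected disks $R_i(D_j)$ with $j\neq i$ onto the reflected disks $R_i'(D_j')$. Iterating inside these disks, and then in the disks obtained at each further level, gives a sequence $F_n$ of uniformly quasiconformal maps. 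The maps stabilize on the union $\Omega=\bigcup_{g}g(S)$ over the reflection group $\Gamma$ generated by the $R_i$. They converge uniformly, since the nested disks shrink to points because of the disjointness of the closures. The limit $F$ is a quasiconformal map of $\wh\C$ with $F|_S=f$, and it satisfies the equivariance relation $F\circ g=g'\circ F$ for $g\in\Gamma$.

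\textbf{Step 2 (where $F$ is conformal).} The set $\Omega$ is a countable union of images of $S$ under anticonformal maps, so $|\Omega|=0$. Hence the Beltrami coefficient of $F$ is determined on the complementary set $E$. This set consists of points $x$ lying in an infinite nested sequence of reflected disks $B_1\supset B_2\supset\cdots$, with $B_n=g_n(D_{i_n})$ and $\operatorname{diam}B_n\to 0$. One cannot hope that $|E|=0$: since $|S|=0$, the level-$n$ disks fill each level-$(n-1)$ disk up to measure zero, so $E$ has full measure. The task is therefore to show directly that $\mu_F(x)=0$ for a.e.\ $x\in E$.

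\textbf{Step 3 (main obstacle: blow-up at points of $E$).} Fix $x\in E$ at which $F$ is differentiable with invertible derivative $A=DF(x)$; this holds a.e. The map $F$ sends each boundary circle $\partial B_n$ onto a circle $\partial B_n'$. The key difficulty is geometric: we need infinitely many $n$ for which $\partial B_n$ has radius $r_n$ comparable to $\operatorname{dist}(x,\partial B_n)$ or larger, and stays in a controlled position relative to $x$. Under such a normalization, the rescaled circles $(\partial B_n-x)/r_n$ subconverge to a genuine circle (or line) $\sigma$ passing at bounded distance from the origin. Differentiability then gives that $A(\sigma)$ is a circle. Varying $n$ so that the normalized limits give circles of several different shapes and positions, one concludes that the linear map $A$ sends some non-degenerate circle to a circle. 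An ellipse that is a circle forces $A$ to be conformal.

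To secure the needed geometric control, I would first try a Lebesgue density argument applied to the sets $E_{k}=\{x\in E : \text{infinitely many } B_n \text{ satisfy } \operatorname{dist}(x,\partial B_n)\le k\, r_n\}$. The aim is to show that $\bigcup_k E_k$ has full measure in $E$, using that the $B_n$ shrink while the sets $g(S)$ accumulate on their boundaries. Granting this, $\mu_F=0$ a.e., so $F$ is $1$-quasiconformal. By Weyl's Lemma $F$ is then a M\"obius transformation, and $f=F|_S$ as claimed.
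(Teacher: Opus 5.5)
The paper does not prove this statement. It quotes it from \cite{BKM09}, whose proof follows your outline: extend $f$ equivariantly with respect to the two reflection groups, note that the orbit $\Omega$ of $S$ is null, and show $DF(x)$ is conformal at a.e.\ $x\in E$ by blowing up along the nested disks containing $x$. Your plan is therefore the right one. However, the obstacle you isolate in Step~3 is not an obstacle, and as written you leave it "granted". Since $x\in B_n$, you always have $\operatorname{dist}(x,\partial B_n)\le r_n$, so your set $E_1$ is already all of $E$ and no density argument is needed. Rescaling by $r_n$ gives unit circles whose disks contain $0$, and a subsequence converges to a unit circle $\sigma$ (never a line). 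Differentiability gives $(\partial B_n'-F(x))/r_n\to A(\sigma)$ in the Hausdorff sense. A Hausdorff limit of circles that is bounded and has positive diameter is a circle, so $A(\sigma)$ is a circle and $A$ is a similarity. One $\sigma$ suffices; there is no need to vary shapes.

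The steps that genuinely need an argument are two that you pass over in Step~1.

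\emph{Shrinking of the nested disks.} This does not follow from disjointness of closures. The annuli $B_n\setminus\overline{B_{n+1}}$ are (anti-)M\"obius images of $\wh\C\setminus(\overline{D_{i_n}}\cup\overline{D_{i_{n+1}}})$. Their moduli can be arbitrarily small when the $D_i$ are not uniformly relatively separated, so disjointness alone does not rule out that $\bigcap_n B_n$ is a nondegenerate disk. The missing ingredient is discreteness of $\Gamma$. Suppose $B_n\to B_\infty$ with $B_\infty$ nondegenerate. Then the reflections $\rho_n\in\Gamma$ in $\partial B_n$ converge, so $\rho_{n+1}\rho_n\to\mathrm{id}$ uniformly on compacta of $\mathbb{H}^3$, while $\rho_{n+1}\rho_n\neq\mathrm{id}$. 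Let $P\subset\mathbb{H}^3$ be the region outside the pairwise disjoint, locally finite closed half-spaces over the $D_i$; it has nonempty interior. By ping-pong, $\gamma(\mathrm{int}\,P)\cap\mathrm{int}\,P=\emptyset$ for every $\gamma\neq\mathrm{id}$, which contradicts the convergence to the identity. Uniform shrinking of level-$n$ disks, which you need for uniform convergence of $F_n$, then follows by a K\"onig-lemma argument, since each level contains only finitely many disks of diameter $\ge\epsilon$.

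\emph{Quasiconformality of $F_1$.} Removability of circles alone does not give this, because $F_1$ is glued along all of $S$, which has empty interior and need not be removable. What works is that $f$ is a global quasiconformal map with $f(D_i)=D_i'$. Reflecting in only finitely many disks gives $K$-quasiconformal maps, since finitely many circles are removable. These maps converge uniformly to $F_1$ because $\operatorname{diam}D_i'\to0$. The same argument applies at every level.
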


The following theorem by M.~Bonk~\cite{Bo11} allows one to conjugate certain expanding conformal or rational dynamical systems on Sierpi\'nski carpets to those of uniformly quasiconformal or quasiregular systems on Schottky sets. 

\begin{theorem}\cite[Theorem~1.1]{Bo11}\label{T:Bo}
	Let $C$ be a Sierpi\'nski carpet in $\wh\C$ whose peripheral circles are uniformly relatively separated uniform quasicircles. Then there exists $\beta$, a quasiconformal map of $\wh\C$, such that $S=\beta(C)$ is a Schottky set.
\end{theorem}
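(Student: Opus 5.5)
The plan is to approximate $C$ by finitely connected domains, uniformize each one conformally onto a circle domain by Koebe's theorem, and pass to a limit of uniformly quasiconformal extensions. Let $\{D_i\}_{i\in\N}$ be the complementary components of $C$ in $\wh\C$, so that $\partial D_i$ are the peripheral circles. For each $n$, set $\Omega_n=\wh\C\setminus\bigcup_{i=1}^n\overline{D_i}$. By Koebe's uniformization theorem for finitely connected domains there is a conformal map $g_n\co\Omega_n\to\Omega_n'$ onto a circle domain. Its complement consists of $n$ closed geometric disks $\overline{B_1^n},\dots,\overline{B_n^n}$ with pairwise disjoint closures. We normalize $g_n$ by fixing three points of $C$ that lie at definite mutual distance. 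Since each $\partial D_i$ is a Jordan curve, $g_n$ extends homeomorphically to $\partial D_i\to\partial B_i^n$.

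The heart of the argument is a uniform estimate. We want to show that the boundary maps $g_n|_{\partial D_i}\co\partial D_i\to\partial B_i^n$ are $\eta$-quasisymmetric, with $\eta$ independent of both $n$ and $i$, once both circles are normalized, for instance by Möbius maps sending each to the unit circle. I would prove this by a modulus argument, comparing moduli of curve families in $\Omega_n$ and in $\Omega_n'$. The standard conformal modulus is not enough, because curves in $\Omega_n$ must avoid the other removed disks. One therefore works with Schramm's transboundary modulus, or with a carpet modulus in which each removed $D_j$ is assigned a single weight; this quantity is conformally invariant under $g_n$. The hypotheses enter as follows.
\begin{itemize}
\item Uniform relative separation, $\operatorname{dist}(\partial D_i,\partial D_j)\ge\delta\min\{\operatorname{diam}\partial D_i,\operatorname{diam}\partial D_j\}$, shows that annuli around $\partial D_i$ separating two arcs meet the other disks only in a controlled way. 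This yields two-sided modulus bounds depending only on $\delta$.
\item The uniform quasicircle condition makes each $\partial D_i$ uniformly quasisymmetric to a round circle. It also gives the Loewner-type lower bounds for families joining two continua.
\end{itemize}
The corresponding bounds on the circle-domain side then force quasisymmetry of $g_n$ on each boundary component. The same reasoning gives that the disks $B_i^n$ are uniformly relatively separated, so they neither collapse nor touch in the limit.

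With the uniform boundary control in hand, extend $g_n$ into each $D_i$. Compose the Beurling--Ahlfors extension of the normalized circle homeomorphism with a quasiconformal reflection in the quasicircle $\partial D_i$, or equivalently with a uniformly quasiconformal map $D_i\to\mathbb D$. This gives homeomorphisms $\beta_n\co\wh\C\to\wh\C$ that are conformal on $\Omega_n$ and uniformly $K$-quasiconformal on each $D_i$, $i\le n$, with $K=K(\eta)$. By removability of quasicircles for quasiconformal maps, each $\beta_n$ is globally $K$-quasiconformal. Because of the three-point normalization, the family $\{\beta_n\}$ is normal. A subsequence therefore converges uniformly to a $K$-quasiconformal map $\beta\co\wh\C\to\wh\C$.

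Finally, for each fixed $i$, the curves $\beta_n(\partial D_i)$ are round circles for all $n\ge i$. Their limit $\beta(\partial D_i)$ is a round circle and not a point, since $\beta$ is a homeomorphism. Hence $\beta(D_i)$ is an open geometric disk, and these disks have disjoint closures because $\beta$ is injective. Therefore $\beta(C)=\wh\C\setminus\bigcup_i\beta(D_i)$ is a Schottky set. The main obstacle is the uniform quasisymmetry of the second paragraph, that is, the transboundary modulus estimates, and that is where I would concentrate the work.
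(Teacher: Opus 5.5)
\emph{Verdict: the strategy is the right one, but the step you defer is essentially the whole theorem, so as written this is a genuine gap.}

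The paper does not prove this statement; it quotes it from \cite{Bo11}. Your outline follows the architecture of Bonk's proof there:
\begin{enumerate}
\item exhaust by the finitely connected domains $\Omega_n$;
\item uniformize each by Koebe's theorem;
\item get control independent of $n$ from Schramm's transboundary modulus;
\item extend into the holes and pass to a limit.
\end{enumerate}
The steps you actually carry out are fine: the Carath\'eodory extension, the Beurling--Ahlfors extension composed with a uniformly quasiconformal map $D_i\to\mathbb D$, removability of finitely many quasicircles, compactness of normalized $K$-quasiconformal maps, and Hausdorff limits of the circles $\beta_n(\partial D_i)$. The relative separation of the $B_i^n$ that you mention is not needed there, since uniform $K$ already prevents degeneration.

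The gap is the step you postpone. It is a quantitative Koebe theorem: $g_n|_{\partial D_i}$ is quasi-M\"obius with a distortion function depending only on $\delta$ and the quasicircle constant, not on $n$ or $i$. This normalization-free form is the right one, since $\partial D_i$ is in general not a circle and cannot be sent to the unit circle by a M\"obius map. Your second paragraph names the tool but supplies no estimate, and it glosses over exactly where the difficulty lies.

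Conformal invariance only transports numbers, so you need two geometric estimates that are uniform in $n$.

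\emph{First estimate, in $\Omega_n$.} You need an upper bound $\psi(t)$ for the transboundary modulus of curves joining two arcs of $\partial D_i$ at relative distance at least $t$. It must satisfy $\psi(t)\to0$ as $t\to\infty$ and depend only on $\delta$ and the quasicircle constant.
\begin{itemize}
\item The family has to be taken in $\wh\C\setminus D_i$. If curves may cross $D_i$, admissibility forces the weight of $D_i$ to be at least $1$, so the modulus is at least $1$.
\item Relative separation is what prevents a single large hole $D_j$ from bridging the two arcs cheaply.
\item Bounding $\sum_j\rho_j^2$ by area uses the uniform fatness of the quasidisks $D_j$. So the quasicircle constant enters here as well, not only $\delta$ as your sketch suggests.
\end{itemize}

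\emph{Second estimate, in $\Omega_n'$.} You need a positive lower bound for the analogous modulus of curves in $\wh\C\setminus B_i^n$ joining arcs of $\partial B_i^n$ at relative distance at most $1$. It must use only the roundness of the $B_j^n$, for instance via a Bojarski-type inequality, because nothing is known a priori about their mutual position. In particular, no decaying upper bound is available on that side: a large disk $B_j^n$ running close to $\partial B_i^n$ is a cheap bridge.

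\emph{Conclusion.} With conformal invariance, the two estimates give the following for arcs $E,F\sub\partial D_i$: if $\Delta(g_n(E),g_n(F))\le1$, then $\Delta(E,F)\le T$, with $T$ depending only on $\delta$ and the quasicircle constant. Using bounded turning of $\partial D_i$ and a M\"obius normalization, this yields a weak, hence full, quasisymmetry condition for $g_n^{-1}$. Until these estimates are proved, the proposal reduces the theorem to its core rather than proving it.
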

\noindent
Here, peripheral circles $\{C_i,\ i\in I\}$ of $C$ being \emph{uniform quasicircles} means that there exists $K\ge1$ such that each peripheral circle of $C$ is an image of the unit circle under a global $K$-quasiconformal map of $\wh\C$. A family of continua $\{C_i,\ i\in I\}$ in $\wh\C$ is said to be \emph{uniformly relatively separated} if there exists $\delta>0$ such that the relative distance
$$
\Delta(C_i, C_j)=\frac{{\rm dist}(C_i, C_j)}{\min\{{\rm diam}(C_i),{\rm diam}(C_j)\}}\ge\delta,\quad i,j\in I,\ i\neq j, 
$$ 
where ${\rm dist}$ stands for the spherical distance and ${\rm diam}$ the spherical diameter. Note that for a family of circles in $\wh\C$ being uniform quasicircles as well as being uniformly relatively separated are invariant properties under global quasiconformal maps of $\wh\C$.  

%

An earlier result by the author~\cite[Theorem~1.2]{Me12}, combined with \cite[Lemma~2.1]{Me14a}, guarantees that locally defined quasiconformal maps between Schottky sets enjoy differentiability properties akin to those of conformal maps.

\begin{theorem}\cite[Theorem~1.2]{Me12}\label{T:SchM}
Let $S, \widetilde S$ be Schottky sets, with $S$ having measure zero.
Let $f\co U\to\wt U$ be a quasiconformal map between open sets $U,\wt U$, such that $f(U\cap S)=\wt U\cap\widetilde S$. Then  
$f$ is differentiable along $S$ in the sense that for each $p\in U\cap S$, the derivative along $S$ defined by
\begin{equation}\label{E:Conf}
f'(p)=\lim_{q\in S,\, q\to p}\frac{f(q)-f(p)}{q-p}
\end{equation}
exists. 
Additionally, the derivative $f'$ is continuous along $S$.
\end{theorem}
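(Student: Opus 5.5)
The plan is a blow-up argument. I would show that every rescaling limit of $f$ at $p$ is a complex-linear map $z\mapsto az$, and that the constant $a$ does not depend on the subsequence. That yields the limit \eqref{E:Conf}; continuity of $f'$ then follows from a compactness argument.

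\emph{Step 1: topological normalization.} Since $U\cap S$ is a relatively open piece of a Schottky set, its peripheral circles meeting $U$ are characterized topologically as the non-separating Jordan curves in $S$. Hence $f$ maps each peripheral circle $\partial D_i$ with $\overline{D_i}\subset U$ onto a peripheral circle of $\wt S$. After shrinking $U$ to a neighborhood of $p$, I may assume the following: whenever $\overline{D_i}$ meets $U$, either $\overline{D_i}\subset U$, or the circle is handled by the argument of \cite[Lemma~2.1]{Me14a}, which provides the passage from the local setting to one where the relevant peripheral circles are mapped to peripheral circles.

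\emph{Step 2: Schwarz reflection.} For each such disk $D_i$, let $R_i$ be the reflection in $\partial D_i$ and $\wt R_i$ the reflection in the image circle. I redefine $f$ on $D_i$ by $\wt R_i\circ f\circ R_i$. Iterating over the group generated by these reflections gives a $K$-quasiconformal map $F$ on a neighborhood of $p$, with the same $K$ because reflections are anticonformal. This $F$ satisfies $F\circ R_i=\wt R_i\circ F$. Its Beltrami coefficient is determined by its values on a fundamental region, while $S$ has measure zero, so every point of $S$ is approximated by small round disks on which $F$ is an explicit reflection-conjugate.

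\emph{Step 3: blow-up and rigidity.} For $p\in U\cap S$ and $r_n\to0$, consider $F_n(z)=(F(p+r_nz)-F(p))/\rho_n$, where $\rho_n$ is chosen so that $|F_n(1)|=1$. By compactness of normalized $K$-quasiconformal maps, a subsequence converges to a $K$-quasiconformal $g$ of $\C$. The rescaled sets $(S-p)/r_n$ subconverge in the Hausdorff sense to a weak tangent $T$, which is a Schottky-type set (possibly with half-planes as complementary components), and $g$ maps $T$ onto a corresponding tangent $\wt T$ of $\wt S$.

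I expect the main obstacle to be that $T$ need not have measure zero, so Theorem~\ref{T:BKM} does not apply verbatim. To handle this I would first try to use the equivariance inherited from Step 2. The limit $g$ conjugates the limiting family of reflections in the circles of $T$ to those of $\wt T$. These circles, together with their images under the generated group, form a family of circles through which $g$ commutes with reflections. A quasiconformal map with this property has Beltrami coefficient invariant under a non-elementary group of anti-M\"obius maps whose orbits are dense. The standard ergodicity argument, or directly the argument of \cite{BKM09} applied to the measure-zero limit set of that group, then forces $g$ to be a similarity. The normalization $g(0)=0$ makes it $z\mapsto az$ or $z\mapsto a\bar z$, and orientation excludes the latter.

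\emph{Step 4: uniqueness of $a$ and continuity.} Suppose two subsequences gave different linear maps $a_1z$ and $a_2z$. Interpolating scales and using that the family of rescalings is connected and compact would produce a blow-up limit that is not linear, contradicting Step 3. Thus $F_n\to az$ locally uniformly along all $r_n\to0$, which gives \eqref{E:Conf} for $q\in S$. Continuity of $f'$ along $S$ follows by applying the same compactness to blow-ups centered at points $p_n\to p$ of $S$, since limits of such blow-ups are again linear and must agree with $f'(p)$.
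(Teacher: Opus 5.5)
The paper does not prove this statement; it quotes it from \cite[Theorem~1.2]{Me12}, using \cite[Lemma~2.1]{Me14a} to pass to the local setting. Judged on its own, your argument has a genuine gap at its core, in Step~3. Measure zero is not scale-invariant and does not pass to weak tangents: it says the disks near $p$ fill up area, not that they do so at every scale. For a rapidly decreasing sequence $r_n$, pack the annuli $\{r_n/n<|z-p|<nr_n\}$, up to measure zero, by disjoint closed disks of radii $<r_n/n$. This gives a Schottky set of measure zero whose tangent along $(r_n)$ is $T=\C$. Then there are no limiting circles at all, and nothing constrains $g$ beyond quasiconformality.

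Even when $T$ has circles, the properties you extract do not force linearity. Suppose $T$ is a closed half-plane, say $p$ lies on a peripheral circle and no other disk of size comparable to $r_n$ is nearby. Then the radial stretch $g(z)=z|z|^{\varepsilon}$ maps $T$ onto itself and commutes with the reflection. In general, invariance of $\mu_g$ under the reflection group $\Gamma_T$ does not make $\mu_g$ vanish. The translates $\gamma(T)$ overlap only along circles, so $\mu_g$ can be prescribed arbitrarily on $T$ and propagated equivariantly. The action on $\Gamma_T(T)$ is therefore far from ergodic. The density argument of \cite{BKM09} only gives $\mu_g=0$ at points lying in infinitely many nested disks, i.e.\ off $\Gamma_T(T)$, which is exactly why it needs $|T|=0$. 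The information that makes the theorem true sits in the disks of size $o(r_n)$, which this blow-up discards. Compare Theorem~\ref{T:Rig}: it is a tangent argument, and it assumes local porosity precisely because porosity, unlike measure zero, survives in tangents.

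Step~4 fails independently: linearity of every blow-up limit does not give the limit \eqref{E:Conf}. The map $h(z)=z\,e^{i\log\log(1/|z|)}$ is quasiconformal near $0$ with dilatation tending to $0$. Every subsequential limit of its normalized blow-ups at $0$ is a rotation $z\mapsto e^{i\alpha}z$, every $\alpha$ occurs, and $h(z)/z$ has no limit. The set of subsequential limits is a continuum of linear maps, so ``interpolating scales'' never produces a non-linear limit. Likewise, $z\,e^{\sin\log\log(1/|z|)}$ has all normalized blow-ups equal to the identity, while $|h(z)/z|$ oscillates between $e^{-1}$ and $e$. Your normalization $|F_n(1)|=1$ discards exactly the quantity $|f(q)-f(p)|/|q-p|$ you need. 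The continuity claim for $f'$ has the same defect: blow-ups at $p_n\to p$ at scales much smaller than $|p_n-p|$ say nothing about $f'(p)$.

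A smaller issue is in Step~2. The reflected map $\wt R_i\circ f\circ R_i$ on $D_i$ needs $f$ on all of $\wh\C\setminus\overline{D_i}$. So $F$ is undefined on $R_i(\wh\C\setminus U)$ inside every small disk, and these sets accumulate at every point of $S$. You do not get a quasiconformal map on a neighborhood of $p$. This can be repaired: first replace $f$ by a global quasiconformal map agreeing with it near $p$, and reflect only in disks inside a small ball. Steps~3 and~4, however, need a genuinely different idea.
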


This theorem motivated the following definition~\cite{Me14b}, extended to non-local homeomorphisms.

\begin{definition}
	If $S$ and $\wt S$ are Schottky sets, a continuous map $f\co U\cap S\to\widetilde S$ is called a Schottky map if $f^{-1}(\wt S)=U\cap S$, there exists a discrete subset $E$ of $U\cap S$ such that the map $f$ is differentiable along $S$ in $(U\cap S)\setminus E$ in the sense of~\eqref{E:Conf}, and the derivative $f'$ is continuous along $S$ in $(U\cap S)\setminus E$. 
\end{definition}

The reason we extend the notion of a Schottky map to non-local homeomorphisms, i.e., we allow for an exceptional discrete set $E$, is to include quasiregular, rather than just quasiconformal, maps; see also~\cite{MS26}. Quasiregular maps are not local homeomorphisms near a discrete set of branch points.
%
%
%
%
%
%
It turns out that Schottky maps posses a great deal of rigidity, even beyond that of conformal maps, as the following results from~\cite{Me14b} indicate.
\begin{theorem}\cite[Theorem~4.1]{Me14b}\label{T:Rig}
	Let $S$ be a locally porous Schottky set and let $U\subseteq\C$ (here we view $\wh\C=\C\cup\{\infty\}$) be an open set with $U\cap S$ connected. Let $f\co U\cap S\to S$ be a Schottky map and $p\in U\cap S$. Assume further that $f(p) = p$ and $f'(p) = 1$. Then $f = id$ in $U\cap S$.	
\end{theorem}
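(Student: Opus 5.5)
The plan is a connectedness argument. Let $E$ be the exceptional discrete set for $f$, and define
$$A=\{q\in (U\cap S)\setminus E:\ f(q)=q,\ f'(q)=1\}.$$
By hypothesis, and after possibly shrinking $E$ near $p$, we have $p\in A$. The goal is to show that $A$ is nonempty, closed and open in $(U\cap S)\setminus E$. Removing a discrete set from the connected set $U\cap S$ should keep it connected, since $S$ is a carpet-like set with no cut points. Then $A=(U\cap S)\setminus E$, and continuity of $f$ gives $f=\mathrm{id}$ on all of $U\cap S$. Closedness of $A$ is immediate from continuity of $f$ and of $f'$ along $S$ off $E$. So the whole content is openness.

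\textbf{Step 1 (peripheral circles go to peripheral circles).} Since $f^{-1}(S)=U\cap S$ and $f$ is continuous, $f$ cannot map a point of a peripheral circle $C\subset U$ into the interior of $S$ in a way that "fills in" a complementary disk. More precisely, points of $C$ are accessible from a complementary disk $D$, and points of $D$ near $C$ are not in $S$. I would use the local homeomorphism property of $f$ away from $E$, which follows from the derivative $f'$ being nonzero generically, to argue that $f(C)$ lies in a single peripheral circle of $S$. The orientation should be such that $f$ maps the side of $D$ to the complementary side. This mirrors the standard fact for quasiconformal maps preserving carpets.

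\textbf{Step 2 (invariance of nearby circles).} Fix $q\in A$. Since $f(q)=q$ and $f'(q)=1$, we have $f(z)=z+o(|z-q|)$ for $z\in S$ near $q$. By local porosity of $S$, for every small $r$ there are peripheral disks $D$ of radius comparable to $r$ at distance comparable to $r$ from $q$. I also want several such disks in different directions at the same scale, which I would obtain by applying porosity at a few points around $q$. For such a $D$, Step 1 says $f(\partial D)\subset\partial D'$ for some peripheral circle $\partial D'$. Moreover, $f(\partial D)$ lies within $o(r)$ of $\partial D$ in the Hausdorff sense. Since distinct peripheral disks are disjoint and $\partial D$ has size about $r$, this forces $D'=D$. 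So every peripheral circle in a neighborhood of $q$ of size comparable to its distance from $q$ is $f$-invariant. Making this uniform over a neighborhood $W$ of $q$, using continuity of $f'$, I would conclude that there is a neighborhood $W$ in which every peripheral circle in an appropriate scale range is invariant.

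\textbf{Step 3 (points near $q$ lie in $A$).} Take $x\in W\cap S\setminus E$. By differentiability along $S$, the affine map $L(z)=f(x)+f'(x)(z-x)$ approximates $f$ on $S$ near $x$ up to $o(r)$ at scale $r$. Choose two invariant peripheral circles $\partial D_j$, with centers $c_j$ and radii $\rho_j$ comparable to $r$, in non-parallel directions $c_j-x$ (porosity again). The map $L$ sends $\partial D_j$ to a circle $o(r)$-close to $\partial D_j$. Comparing radii gives $|f'(x)|=1+o(1)$, and comparing centers gives
$$f(x)-x+(f'(x)-1)(c_j-x)=o(r),\qquad j=1,2.$$
Subtracting the two relations and using $|c_1-c_2|\gtrsim r$ gives $f'(x)-1=o(1)$, hence $f'(x)=1$. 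Then $f(x)-x=o(r)$ for all small $r$, so $f(x)=x$. Thus $W\cap S\setminus E\subset A$, and $A$ is open.

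The main obstacle I expect is Step 2, and specifically getting the invariance uniformly on a whole neighborhood rather than just at the single point $q$. It also needs a verified Step 1 that copes with possible branch points in $E$. I would first try to handle Step 2 only along a sequence of scales at $q$. I would then run Step 3 at points $x$ approaching $q$, using that any circle invariant at $q$'s scales is also a valid test circle at nearby $x$, and bootstrap from there.
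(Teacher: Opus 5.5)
\textbf{There is a genuine gap in Steps 2--3.} It is the obstacle you flagged, and the proposed bootstrap does not close it. Step 3 concludes $f(x)=x$ from $f(x)-x=o(r)$ as $r\to0$, so it needs invariant peripheral circles at \emph{every} small scale around $x$. Step 2 is anchored at the exact fixed point $q$. It therefore certifies invariance only for circles whose radius is comparable to their distance from $q$, which around $x\neq q$ means only scales $r\gtrsim|x-q|$. At those scales your two-circle computation returns $|f(x)-x|\lesssim\epsilon|x-q|$ and $|f'(x)-1|\lesssim\epsilon$. You already had both bounds from uniform differentiability at $q$, so nothing is gained. To certify a circle of radius $r\ll|x-q|$ near $x$, you need the displacement $|f(z)-z|$ on it to be $\ll r$, i.e.\ $|f(x)-x|\ll r$. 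Letting $r\to0$, this is exactly the statement $f(x)=x$ you are trying to prove. Reusing circles that are invariant ``at $q$'s scales'' cannot produce invariant circles at smaller scales around $x$.

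\textbf{The missing idea is to take the scale from the displacement itself.} This is the heart of the paper's proof. The paper takes $p_k\to p$ with $\lambda_k=f(p_k)-p_k\neq0$ and rescales by $z\mapsto p_k+\lambda_k z$. Continuity of $f'$, in the form $\frac{f(a)-f(b)}{a-b}\to f'(p)=1$ uniformly as $a,b\to p$ in $S$, makes the rescaled maps converge to $t(z)=z+1$. A weak tangent $S_\infty$ then satisfies $t(S_\infty)\subseteq S_\infty$. This is incompatible with the bounded complementary disks of radius $>1$ that local porosity forces in $S_\infty$.

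The same argument works at a single scale without limits. Put $v=f(x)-x$ and $d=|v|$. On $S\cap B(x,Cd)$, with $C$ depending only on the porosity constant $\alpha$, $f$ differs from $z\mapsto z+v$ by at most $\epsilon Cd$. Porosity supplies a peripheral disk $D\subset B(x,Cd)$ of radius at least $2d$. The point of $\partial D$ whose outward normal is $-v/d$ is sent to within $\epsilon Cd<d$ of a point at depth $d$ inside $D$. This contradicts $f(\partial D)\subseteq S$. Your own Step 3 also closes the argument if you re-prove Step 2 around $x$ assuming only $|f(x)-x|\ll r$, and then run Step 3 at $r$ a large multiple of $|f(x)-x|$.

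\textbf{Step 1 is unnecessary and, as justified, unavailable.} The argument above uses only $f(U\cap S)\subseteq S$. Your justification does not hold: the definition gives neither $f'\neq0$ nor openness of $f$ on $S$, and an injective map with nonzero derivative need not be open on a carpet. If $f$ is continuous on $U$ with $f^{-1}(S)=U\cap S$, the correct argument is that $f(D)$ is a connected subset of $\wh\C\setminus S$, so it lies in a single peripheral disk.
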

\noindent
A Schottky set $S$ is called \emph{locally porous} at $p\in S$ if there exist a neighborhood $U$ of $p$, a constant $r_0>0$ and a constant $\alpha\ge1$, such that for every $q\in U\cap S$ and each $r$ with $0 <r\le r_0$, there exists a peripheral circle $C_i,\ i\in I$, of $S$ such that $B(q,r)\cap C_i \neq\emptyset$ and 
$$r/\alpha \le {\rm rad}(C_i)\le \alpha r,$$ where ${\rm rad}$ stands for the radius of $C_i$. 
A Schottky set $S$ is called locally porous if it is locally porous at every $p\in S$. Every locally porous Schottky set has measure zero since it cannot have Lebesgue density points. In addition, the local porosity property is invariant under global quasiconformal maps of $\wh\C$. 
Since the idea of the proof of Theorem~\ref{T:Rig} is short and elementary, we include it here for reader's convenience; please see~\cite[Theorem~4.1]{Me14b} for more details if necessary.
\begin{proof}
Connectedness of $U\cap S$ implies that it is enough to assume for contradiction that there exists a sequence $(p_k)_{k\in\N},\ p_k\in U\cap S$, such that $p_k\to p$ as $k\to\infty$, but $f(p_k)\neq p_k$ for all $k\in\N$.
For each $k\in\N$, we define $\lambda_k=f(p_k)-p_k$; so the sequence $(\lambda_k)_{k\in\N}$ consists of non-zero numbers and its limit is 0. If we denote by $S_k, S_k(U)$ the preimages of the sets $S$ and $U\cap S$, respectively, by the linear map $\phi_k(z)=p_k+\lambda_k z$, then we have the following commutative diagram
$$
\begin{CD}
	S_k(U) @>f_k>> S_k\\
	@V{\phi_k}VV
	@VV{\phi_k}V\\
	U\cap S
	@>f>>
	S,
\end{CD}
$$ 
where 
$$
f_k(z)=\frac{f(p_k+\lambda_k z)-p_k}{\lambda_k}=\frac{f(p_k+\lambda_k z)-f(p_k)}{\lambda_k z}z+1,\quad k\in\N.
$$
Since $f'$ is assumed to be continuous and $f'(p)=1$, the sequence $(f_k)_{k\in\N}$ converges to the translation $t(z)=z+1$ uniformly on compacta in $\C$ intersected with $S_k(U),\ k\in\N$. Note that both sequences of sets $(S_k)_{k\in\N}, (S_k(U))_{k\in\N}$ have subsequences, and therefore a common subsequence, that converge in the Hausdorff topology to the same, so-called \emph{weak tangent space} $S_\infty\subset\C$ of $S$ at $p$.
The local porosity assumption of $S$ implies that the complement of $S_\infty$ in $\C$ contains disks of arbitrarily large radii. However, from the above commutative diagram we conclude that the set $S_\infty$ has to be invariant under the translation $t$.   
This is a contradiction. 
\end{proof}

The following is an immediate consequence of Theorem~\ref{T:Rig}.
\begin{corollary}\cite[Corollary 4.2]{Me14b}\label{C:Uni}
	 Let $S$ be a locally porous Schottky set and suppose that $U$ is an open set such that $U\cap S$ is connected. Let $f, g\colon U\cap S \to \wt S$ be Schottky maps into a Schottky set $\wt S$, and consider $F =\{p\in U\cap S\colon f(p)=g(p)\}$. Then $F =U\cap S$, provided $F$ has an accumulation point in $U$. 
\end{corollary}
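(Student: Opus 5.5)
The plan is to reduce to Theorem~\ref{T:Rig}, using a connectedness argument on $U\cap S$. Let $p\in U$ be an accumulation point of $F$. Since $f,g$ are continuous on $U\cap S$ and $S$ is closed, $p\in U\cap S$ and $p\in F$. Consider the set
$$
W=\{q\in U\cap S\colon f=g \text{ on } B(q,r)\cap S \text{ for some } r>0\}.
$$
It is relatively open in $U\cap S$ by definition. The argument will show that $W$ is nonempty and relatively closed, so that connectedness gives $W=U\cap S$ and hence $F=U\cap S$.

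The key local step is as follows. Let $a\in U\cap S$ be an accumulation point of $F$ that avoids the exceptional sets $E_f,E_g$ and satisfies $g'(a)\neq 0$. Along a sequence $p_k\in F$ with $p_k\to a$ we get
$$
f'(a)=\lim\frac{f(p_k)-f(a)}{p_k-a}=\lim\frac{g(p_k)-g(a)}{p_k-a}=g'(a).
$$
Since $g'(a)\neq0$ and $g'$ is continuous along $S$, $g$ is injective on $B(a,r)\cap S$ for small $r$. Indeed, $|g(q)-g(q')-g'(a)(q-q')|$ is small relative to $|q-q'|$; this follows from the rescaling argument used in the proof of Theorem~\ref{T:Rig}, applied to pairs of points of $S$. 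Together with $g^{-1}(\wt S)=U\cap S$, this lets us form a local inverse $g^{-1}$ from a relatively open piece of $\wt S$ onto $B(a,r)\cap S$. Then $h=g^{-1}\circ f$ is defined on a small connected relatively open neighborhood of $a$ in $S$, maps into $S$, and is differentiable along $S$ with continuous derivative by the chain rule. So $h$ is a Schottky map with $h(a)=a$ and $h'(a)=f'(a)/g'(a)=1$. Theorem~\ref{T:Rig} gives $h=\mathrm{id}$, i.e.\ $f=g$ near $a$, so $a\in W$. Applying this at $a=p$ shows $W\neq\emptyset$. Applying it at a boundary point $a$ of $W$ relative to $U\cap S$, which is an accumulation point of $F$ since $F\supseteq W$, shows that such an $a$ lies in $W$ after all, so $W$ is relatively closed.

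The main obstacle is the set of degenerate points: $E_f\cup E_g$ together with the zeros of $g'$ on $F$. $E_f\cup E_g$ is discrete, and removing a discrete set from a connected relatively open subset of a carpet (or Schottky set) keeps it connected, since carpets have no local cut points. So the connectedness argument can be run on $(U\cap S)\setminus(E_f\cup E_g)$, and then extended across $E_f\cup E_g$ by continuity.

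For zeros of $g'$ I would first try to show that they form a discrete set away from $E_g$. If $g'$ vanished at an accumulation point, I would rescale as in the proof of Theorem~\ref{T:Rig}, at scales comparable to $|g(q)-g(a)|$. By local porosity of $S$ together with $g^{-1}(\wt S)=U\cap S$, peripheral circles near $a$ would have to be collapsed in a way incompatible with the image being a Schottky set. Failing that, at a point $a$ with $f'(a)=g'(a)=0$ I would instead choose nearby points $a'\in F$ with $g'(a')\neq0$; such points exist unless $g'\equiv0$ on an open piece, which is impossible for a nonconstant map into a porous Schottky set. Running the local step at such $a'$ then gives points of $W$ arbitrarily close to $a$, which is what the closedness argument needs.
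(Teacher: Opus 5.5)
\textbf{Verdict: the core argument is right, but the treatment of degenerate points has a genuine gap.}

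Your overall plan is the derivation the paper has in mind when it calls the corollary an immediate consequence of Theorem~\ref{T:Rig}. At an accumulation point $a$ of $F$ you get $f'(a)=g'(a)$, invert $g$ locally, apply Theorem~\ref{T:Rig} to $h=g^{-1}\circ f$, and finish with an open--closed argument. Running that argument on $(U\cap S)\setminus(E_f\cup E_g)$, using that carpets have no local cut points, is a sensible addition.

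\textbf{Boundary points of $W$ where $g'$ vanishes.} Your fallback for a relative boundary point $a$ of $W$ with $f'(a)=g'(a)=0$ proves nothing new. Points of $W$ arbitrarily close to $a$ are exactly what $a\in\overline{W}$ already gives you. Closedness needs $a\in W$, i.e.\ $f=g$ on a whole neighborhood of $a$. Your local step produces that only by inverting $g$ at $a$ itself, which is unavailable when $g'(a)=0$. Your first alternative (zeros of $g'$ are isolated off $E_g$ by rescaling) is a heuristic, not an argument. The claim that $g'\equiv 0$ on an open piece is impossible is likewise unproved.

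\textbf{Nonemptiness of $W$.} The same defect appears here. ``Applying this at $a=p$'' presupposes $p\notin E_f\cup E_g$ and $g'(p)\neq 0$. If $F$ accumulates only at a point of $E_f\cup E_g$, the nearby points of $F$ may all be isolated in $F$. In that case your argument yields no point of $W$ at all.

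\textbf{How to close the gap.} Build non-degeneracy into the exceptional set: off $E$, a Schottky map should be a local homeomorphism with nonvanishing derivative, so that all zeros of $g'$ lie in $E_g$. This holds for the maps to which the corollary is applied in Section~\ref{S:Proof}.
\begin{itemize}
\item Those maps are quasiregular, hence locally quasiconformal off a discrete branch set.
\item At a non-branch point $a$, apply Theorem~\ref{T:SchM} to a local quasiconformal branch of $g$ and to its inverse; both Schottky sets there have measure zero.
\item The identity $1=\frac{g^{-1}(g(q))-g^{-1}(g(a))}{g(q)-g(a)}\cdot\frac{g(q)-g(a)}{q-a}$ for $q\in S$, $q\to a$, gives $(g^{-1})'(g(a))\,g'(a)=1$, so $g'(a)\neq 0$.
\end{itemize}
With this, your argument on $(U\cap S)\setminus(E_f\cup E_g)$ goes through verbatim, provided the initial accumulation point is chosen outside $E_f\cup E_g$. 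In the paper's application that is automatic: there $F$ contains a relatively open subset of $\Lambda$ on which both maps coincide with the quasiconformal map $\xi$.
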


\section{Proof of Theorem~\ref{T:Main}}\label{S:Proof}		
		
\noindent
According to~\cite[Proposition~1.4]{Bo11} and~\cite[Theorem~1.10]{BLM16}, the the limit set $\Lambda(G)$ of a convex-cocompact Kleinian group $G$ and the Julia set $\mathcal J(f)$ of a postcritically finite rational map $f$ are locally porous sets whose peripheral circles are uniformly relatively separated uniform quasicircles. Therefore, by Theorem~\ref{T:Bo}, we can conjugate both dynamical systems, namely the group $G$ acting on its limit set and the rational map $f$ on its Julia set, by the corresponding quasiconformal maps $\beta_G$ and $\beta_f$ to dynamical systems acting on Schottky sets. 
Note that, even though $\beta_G$ is only quasiconformal, the elements of the conjugate group $\beta_G\circ G\circ\beta_G^{-1}$ are the restrictions of M\"obius transformations by Theorem~\ref{T:BKM}. Note here that the local porosity property implies that the limit and Julia sets have zero measure. Thus, the given Kleinian group $G$ restricted to $\Lambda(G)$ conjugates by $\beta_G$ to a Kleinian group, necessarily convex-cocompact. The corresponding conjugate map $\beta_f\circ f\circ\beta_f^{-1}$ is uniformly quasiregular in the sense that all the iterates of $f$ are $K$-quasiregular for the same constant $K\ge1$. Moreover, the conjugate map $\beta_f\circ f\circ\beta_f^{-1}$ is a Schottky map on $\beta_f(\mathcal J(f))$ by Theorem~\ref{T:SchM}. 
To simplify notations, from now on we denote the Schottky sets $\beta_G(\Lambda(G))$ and $\beta_f(\mathcal J(f))$ by $\Lambda$ and $\mathcal J$, respectively, and keep the same notations $G$ and $f$ for the conjugate group $\beta_G\circ G\circ\beta_G^{-1}$ and the conjugate map $\beta_f\circ f\circ\beta^{-1}$, respectively. E.g., now $f$ is a uniformly quasiregular map that is Schottky on $\mathcal J$. We also use the same notation $\xi$ for the map $\beta_f\circ\xi\circ\beta_G^{-1}$, where $\xi$ is the hypothetical quasiconformal map from the statement, so that  $\xi\co U\cap\Lambda\to V\cap\mathcal J$ is a Schottky map by Theorem~\ref{T:SchM}.
Since the local porosity property is preserved under quasiconformal maps, the Schottky sets $\Lambda$ and $\mathcal J$ are locally porous.

Let $\tilde p\in V\cap\mathcal J$ be a repelling periodic point of $f$. Such points are dense in $\mathcal J$. By possibly passing to an iterate of $f$, we may and will assume that $\tilde p$ is fixed by $f$, and let $p=\xi^{-1}(\tilde p)\in U\cap \Lambda$. This is a fixed point of the map $f_\xi=\xi^{-1}\circ f\circ\xi$ defined in an open set $U'\subset U$, which we can choose so that $U'\cap\Lambda$ is connected. By Theorem~\ref{T:Groups}, the map $f_\xi$ is the restriction to $U'\cap\Lambda$ of a M\"obius transformation $\mu$ preserving $\Lambda$. Equivalently, 
\begin{equation}\label{E:ToExt}
\xi=f\circ\xi\circ \mu^{-1}	
\end{equation}
in $\mu(U')\cap\Lambda$. 
Since $p$ was chosen so that $\tilde p=\xi(p)$ is a repelling fixed point of $f$, the map $\mu$ is necessarily loxodromic with pole at $p$. Iterating Equation~\eqref{E:ToExt}, namely multiplying it by $f$ on the left and $\mu^{-1}$ on the right over and over, we conclude that $\xi$ extends as a Schottky map to $\Lambda\setminus\{q\}$, where $q$ is the attracting, i.e., other than $p$, fixed point of $\mu$.  Since $\mu$ preserves $\Lambda$, one necessarily has $q\in\Lambda$. Equation~\eqref{E:ToExt} along with the assumption that $\xi$ is quasiconformal in $U$ implies, in particular, that $\xi$ has a quasiregular extension to the complement of any closed disk centered at $q$. Indeed, if $r>0$ is arbitrary and $B(q,r)$ is an open disk centered at $q$ of radius $r$, we can find a sufficiently large $n\in\N$ such that $\mu^{-n}(\wh\C\setminus \overline{B(q,r)})\subset\mu(U')$. Since $\xi$ is quasiconformal in $U$, the map $f$ is quasiregular, and $\mu$ is a M\"obius transformation, the formula $\xi=f^n\circ\xi\circ \mu^{-n}$ gives such an extension to $\wh\C\setminus \overline{B(q,\delta)}$. In particular, the Schottky  map $\xi|_{\Lambda\setminus\{q\}}$ has at most finitely many critical points in a neighborhood of any point of $\Lambda$ other than, possibly, $q$.

If $\xi$ had a finite degree extension to $\Lambda$, Equation~\eqref{E:ToExt} would give a contradiction via the degree count. To conclude a finite degree extension for $\xi$, it is enough to show that $\xi$ has a quasiregular extension in a neighborhood of $q$.  To this end, let $\wt{p'}\in V$ be another repelling fixed point of an iterate of $f$, which we also assume to be $f$ itself, and let $\mu'=\xi^{-1}\circ f\circ\xi$. As above, $\mu'$ extends as a M\"obius transformation preserving $\Lambda$, with a repelling pole at $p'=\xi^{-1}(\wt{p'})$.
Let $q'$ be the attracting pole of $\mu'$.
Using the equation
\begin{equation}\label{E:ToExt2}
	\xi=f\circ\xi\circ \mu'^{-1},	
\end{equation}
we can extend $\xi$ to $\Lambda\setminus\{q'\}$. As above, the extension is necessarily a Schottky map. Moreover, Equation~\eqref{E:ToExt2} gives a quasiregular extension of $\xi$ to $\wh\C\setminus\overline{B(q',r')}$, for arbitrary $r'>0$. By Corollary~\ref{C:Uni}, the two extensions of $\xi$ agree on $\Lambda\setminus\{q,q'\}$. If $q\neq q'$, we conclude that $\xi$ has a quasiregular extension to every point of $\wh\C$, and either of the equations~\eqref{E:ToExt}, \eqref{E:ToExt2} gives a contradiction via the degree count on $\Lambda$. 

Assume that $q=q'$. In this case $\mu$ and $\mu'$ commute. Indeed, the commutator $\mu\circ\mu'\circ\mu^{-1}\circ\mu'^{-1}$ fixes $q=q'$ and has derivative equal to one at this point. Since this is trivially a Schottky map preserving $\Lambda$, Theorem~\ref{T:Rig} gives that the commutator is the identity, i.e., $\mu$ and $\mu'$ commute.
We now consider $\mu''=\mu'^{-1}\circ\mu$, a M\"obius transformation preserving $\Lambda$ that has a repelling fixed point at $p$ and an attracting fixed point at $p'\neq q$. Therefore, we can use $\mu''$ instead of $\mu'$ in the previous argument to derive a contradiction.   
\qed

\end{document}